\numberwithin{equation}{section}
\numberwithin{figure}{section}
  \theoremstyle{definition}
  \newtheorem{defn}{\protect\definitionname}
 \theoremstyle{definition}
  \newtheorem{example}{\protect\examplename}
  \theoremstyle{plain}
  \newtheorem{lem}{\protect\lemmaname}
\date{}
    \def\ps@pprintTitle{%
      \let\@oddhead\@empty
      \let\@evenhead\@empty
      \let\@oddfoot\@empty
      \let\@evenfoot\@oddfoot
    }
  \providecommand{\definitionname}{Definition}
  \providecommand{\examplename}{Example}
  \providecommand{\lemmaname}{Lemma}
\begin{document}

\begin{frontmatter}{}

\title{\textbf{\huge{}A Geometry of Multimodal Systems}}

\author[rvt]{Joaqu\'in D\'iaz Boils\corref{cor1}}
 \cortext[cor1]{corresponding author at: boils@uji.es}  \address[rvt]{Facultad de Ciencias Exactas y Naturales.

Pontificia Universidad Cat\'olica del Ecuador. 

170150. Quito. Ecuador.}
\begin{abstract}
{\normalsize{}Multimodal normal incestual systems are investigated
in terms of multiple categories. The different sorted composition
of operators are exhibited as 2-cells in multiple categories built
up from 2-categories giving rise to different axioms. Subsequently,
coherence results are proved pointing the connections with (usual
and mixed) Distributive Laws. This is given as a geometrical description
of certain axioms inside various systems with a number of necessity
and possibility operators.}{\normalsize \par}
\end{abstract}
\begin{keyword}
\emph{Inclusion Multimodal Logic, Geach Axiom, McKinsey Axiom, Multiple
Category of Cubical Type, Distributive Laws}.
\end{keyword}

\end{frontmatter}{}

\section{Introduction}

In this paper we face a categorical perspective about axioms in Multimodal
Logic systems, that is, those dealing with a number of modalities.
We give a characterization of some fragments of the known as \emph{Geach
}and\emph{ McKinsey axiom}s. 

Following previous studies on (uni- or bi-) modal systems, we make
use of monads and comonads as modal operators with the addition of
a number of Distributive Laws. We must consider not just the interaction
between several different monads and between different monads but
also the interaction between both monads and comonads by using four
different kinds of Distributive Laws (two \emph{normal} and two \emph{entwining}).
Particularly, we work with comonads as \emph{necessity operators}
$\square$ and monads as \emph{possibility operators} $\lozenge$
(see \cite{Bierman-de Paiva,Kobayashi}) together with certain natural
transformations allowing the construction of composed modalities as
a new modal operator. These natural transformations are \emph{Distributive
Laws for} $\square$, \emph{Distributive Laws for} $\lozenge$, \emph{Mixed
Distributive Laws from a comonad $\square$ to a monad $\lozenge$}
and\emph{ Mixed Distributive Laws from a monad $\lozenge$ to a comonad
$\square$} in the form respectively:
\[
\square_{a}\square_{b}\rightarrow\square_{b}\square_{a}\qquad\lozenge_{a}\lozenge_{b}\rightarrow\lozenge_{b}\lozenge_{a}\qquad\lozenge_{a}\square_{b}\rightarrow\square_{b}\lozenge_{a}\qquad\square_{a}\lozenge_{b}\rightarrow\lozenge_{b}\square_{a}
\]

The studies quoted above make use of a propositional language performing
conjunction, disjunction and implication. In the spirit of \cite{Dosen-Petric},
and since we are only interested in the interaction among modalities,
we do not mention nor the ambient category over which these modalities
are defined (namely, a bicartesian closed category) neither the way
in which the different endofunctors expressing modalities preserve
that bicartesian closed structure (see \cite{Bierman-de Paiva,Kobayashi}
for these matters). On the other hand, we do not consider negation
to occur in a modality. 

In \cite{Dosen-Petric} a proof-theoretical approach is considered
to introduce categories whose objects are the same modalities and
the deductions (arrows) deal with these modalities. Our study is quite
similar since we consider 2-categories where the 1-cells are monads,
comonads or both alternating and the 2-cells are deductions. Relying
on \cite{Grandis-Par=0000E9}, we consider multiple categories of
cubical type whose 1-cells are, respectively, monads and comonads
for necessity and possibility operators while 2-cells are Distributive
Laws in two different forms: those living in $Mnd(Mnd(\mathcal{C}))$
and those living in $Cmd(Cmd(\mathcal{C}))$. The \emph{category of
quintets} appear as an inspiring example because of its form. We then
proceed to add different sorted arrows in new directions which are
identified with the different modalities of the system. This gives
a new point of view of the well known 3-categories of Distributive
Laws for monads and comonads, obtaining new categories in multiple
form for Distributive Laws, namely $DMnd$ and $DCmd$.

Multiple categories of cubical type and its symmetric variant are
found to be an appropriate setting to obtain a description of Multimodal
systems, in a geometric fashion, based on sets of axioms such as those
in Geach or McKinsey form. For that we consider two applications of
Ehresmann's \emph{category of quintets}, a 2-categorical instance
of double category from which we build up certain \emph{multiple categories
of modalities} by adding more axis to the cells. They seem to be well
suited for Modal systems to express interaction among the different
modalities.

The interaction of possibility and necessity is more complicated and
requires defining carefully the directions to ensure full interaction
systems. We define in Section 6 $Ent$ as the analogous of the 3-categories
of Mixed Distributive or Entwining Laws here in a multiple cubical
way.

Following this line we obtain a number of axioms for Multimodal systems
restricted to an order in the indexing. That is, Distributive Laws
$d_{ij}^{\mathcal{M}}$ for which $i\geq j$. In particular, we obtain
the known as \emph{Persistency axiom} for $\mathcal{M}=\square,\lozenge$
with the above-mentioned constraint. To develop a wider set of axioms
we introduce some transposition functions (2-cycles in the permutation
group) to make all axis permute and allow all modalities interact.
That is, we need to consider a \emph{symmetric version} of $DMnd$
and $DCmd$, increasing drastically the number of axioms available
by permuting all axis in the cubes of $DMnd$ and $DCmd$. Subsequently,
we do the same in Section 6 for $Ent$. For all multiple categories
defined we prove a \emph{Coherence Lemma} stating that the construction
made is consistent, that is, that the endofunctors living in them
behave as expected.

Section 2 offers a brief review of the Multimodal systems considered
in the sequel. In Section 3 a multiple category made of comonads and
Distributive Laws between them is introduced, based on the Ehresmann's
\emph{category of quintets}, to perform systems with necessity operators.
In this Section a knowledge of these concepts is suposed (the contents
about comonads and Distributive Laws can be found in \cite{Street}
and those about multiple categories in \cite{Grandis-Par=0000E9}).
The symmetric counterpart is defined in Section 4, its introduction
is justified by enlarging the set of axioms at our disposal in the
non-symmetric setting. Section 5 has an analogous content than that
of 3 and 4 but based on monads for possibility operators. In Section
6 some interaction Laws are introduced for monads and comonads in
two different ways (relying on \cite{Power-Watanabe} for this matter)
giving rise to the category of Distributive Laws \emph{in all forms}:
$Ent$. Finally, the symmetric version of $Ent$ is given as that
setting including the greatest number of axioms. In section 7 a summary
of the axioms (in the terminology of \cite{Baldoni}) obtained from
every 2-category is given. 

\section{Inclusion Modal Logics}

We introduce some axioms that give rise to multimodal systems generalizing
many existing temporal, dynamic and epistemic modal systems. 
\begin{defn}
A multimodal system is said to be \emph{non-homogeneous} if not all
modal operators belong to the same system.
\end{defn}
An \emph{interaction axiom }is that axiom producing dependent operators.
In particular:
\begin{defn}
\label{inclusion}A \emph{modal inclusion system} is that characterized
by sets of logical axioms in the form 
\[
\square_{t_{1}}...\square_{t_{n}}A\rightarrow\square_{s_{1}}...\square_{s_{m}}A
\]

\noindent for $n>0,m\geq0.$

\label{normal}A Multimodal system is said to be \emph{normal} if
it satisfies $K_{i}-$formulas in the form $K_{i}:\square_{i}(A\rightarrow B)\rightarrow(\square_{i}A\rightarrow\square_{i}B)$.
\end{defn}
\begin{example}
$K_{n},T_{n},K4_{n},S4_{n}$.

We extend the systems with $K_{i}$ by including axioms in the form
$\square_{t_{1}}...\square_{t_{n}}A\rightarrow\square_{s_{1}}...\square_{s_{m}}A$
for $n>0,m\geq0$ where $t_{1},...t_{n},s_{1},...,s_{m}$ belong to
a certain language which we call $Mod$.

This class of axioms is included into the one defined in \cite{Catach}.
We proceed now to add some symbols to the indexing language $Mod$:
\end{example}
\begin{itemize}
\item a binary operator $\cup$ (\emph{non-deterministic choice})
\item a binary operator $;$ (\emph{sequential composition}) 
\item $\epsilon$ (neutral element for composition).
\end{itemize}
With them we are able to create more labels for modal operators having
also the rule:

\begin{quotation}
\begin{center}
\emph{If $A$ is a proposition in a certain language and $i\in Mod$
then $\square_{i}A$ is also a proposition in that language.} 
\par\end{center}
\end{quotation}
\begin{defn}
\label{incestual}In \cite{Catach} the \emph{incestual modal logic}
is the class of normal modal logic containing the axioms:\footnote{However, in our modelling we will not make use of non-deterministic
choice operator.} 

\[
\square_{\epsilon}A\longleftrightarrow A\qquad\square_{i;j}A\longleftrightarrow\square_{i}\square_{j}A\qquad\square_{i\cup j}A\longleftrightarrow\square_{i}A\wedge\square_{j}A
\]

\end{defn}

In \cite{Baldoni} it is defined for $i,j\in Mod$ a set of \emph{incestual
axioms} by including possibility operators $\lozenge$ in the form\footnote{Baldoni's is included into Catach by $\{G^{\epsilon,b,c,\epsilon}\}\subset\{G^{a,b,c,d}\}$
as seen below.}
\[
G^{a,b,c,d}:\lozenge_{a}\square_{b}A\longrightarrow\square_{c}\lozenge_{d}A
\]
with $a,b,c,d\in Mod$. The axiom $G^{a,b,c,d}$, a generalization
of the known as \emph{Geach Axiom}, is precisely what we take as the
one for which we will construct our model.\footnote{ It should be noticed that axioms in the form $\square_{t_{1}}...\square_{t_{n}}A\rightarrow\square_{s_{1}}...\square_{s_{m}}A$
for $n>0,m\geq0$ where $t_{1},...t_{n},s_{1},...,s_{m}\in Mod$ are
in the form $G^{a,b,c,d}$ by taking $a=\epsilon,b=t_{1};...;t_{n},c=s_{1};...;s_{m},d=\epsilon$
while axioms $\lozenge_{t_{1}}...\lozenge_{t_{n}}A\rightarrow\lozenge_{s_{1}}...\lozenge_{s_{m}}A$
are obtained by taking $a=t_{1};...;t_{n},b=\epsilon,c=\epsilon,d=s_{1};...;s_{m}$.}

This paper develops from a categorical point of view certain fragments
of \emph{generalized Geach axioms} in the forms $G^{\epsilon,b,c,\epsilon}$
(with no necessity operators), $G^{a,\epsilon,\epsilon,d}$ (with
no possibility operators) and $G^{a,b,b,a}$ as well as axioms $\square_{a}\lozenge_{b}A\rightarrow\lozenge_{b}\square_{a}A$,
which are in \emph{McKinsey form}.

\section{The cubical categories $DCmd(\mathcal{C})$}

We make use of the concept of monad and comonad for possibility and
necessity operators respectively as in \cite{Dosen-Petric}. However,
we need to perform the (many different) interaction rules between
the comonads which are involved in every axiom. So an extensive use
of \emph{Distributive Laws for monads} and\emph{ Distributive Laws
for comonads} has to be considered in the context of $Mnd(\mathcal{C})$
and $Cmd(\mathcal{C})$, the 2-categories of monads and comonads.
For that, rather than considering the 3-categories of Distributive
Laws for monads and comonads over a 2-category, namely $Mnd(Mnd(\mathcal{C}))$
and $Cmd(Cmd(\mathcal{C}))=Mnd(Mnd(\mathcal{C}_{op})_{op})$, introduced
in \cite{Street} (see \cite{Chikhladze} for a more recent treatment),
we will make use of the language of multiple categories to consider
coposition of Distributive Laws because of its clarity to illustrate
the interaction between modalities in a geometric fashion.

As known we can compose monads and comonads separatedly whenever we
have a \emph{Distributive Law} at our disposal. We show in this section
which the form of a multiple composition of comonads is for the \emph{possibility}
operators. In the 2-categorical point of view of \cite{Street} a
Distributive Law in the 2-categorical language appears as an instance
of a comonad functor.

Multiple categories, as introduced in \cite{Grandis-Par=0000E9},
are the generalization of cubical categories for which the arrows
can be thought of having different shape. In our case this will mean
to increase the length in which a certain modal operator appears in
sequences as those referred in Definition \ref{inclusion}. Some sequences
will be the domain of arrows relating the length of a modal composite
with some cells in a certain direction. We will make use of these
sequences in order to give a geometrical point of view about multimodal
interaction.
\begin{defn}
\label{multiplecat}A \emph{multiple category} $\mathcal{C}$ \emph{of
cubical type} is a multiple set of cubical type (see \cite{Grandis-Par=0000E9})
with components $\mathcal{C}_{\mathbf{i}}$ whose elements are $\mathbf{i}-cells$
subject to the following data:

\begin{enumerate}
\item given two composable $\mathbf{i}-cells$ $a,b$ (that is: such that
$\partial_{i}^{+}(a)=\partial_{i}^{-}(b)$ for $i\in\mathbf{i}$)
we have the $i-composition$, denoted by $a+_{i}b$, satisfying:{\footnotesize{}
\[
\begin{array}{c}
\partial_{i}^{-}(a+_{i}b)=\partial_{i}^{-}(a)\qquad\partial_{i}^{+}(a+_{i}b)=\partial_{i}^{+}(b)\\
\partial_{j}^{\alpha}(a+_{i}b)=\partial_{j}^{\alpha}(a)+_{i}\partial_{j}^{\alpha}(b)\qquad e_{j}(a+_{i}b)=e_{j}(a)+_{i}e_{j}(b)\textrm{ for }i\neq j
\end{array}
\]
}{\footnotesize \par}
\item for $j\notin\mathbf{i}$ we have a new category with objects in $\mathcal{C}_{\mathbf{i}}$,
arrows in $\mathcal{C}_{\mathbf{i}j}$ (standing for $\mathcal{C}_{\mathbf{i}\cup j}$)
and new faces $\partial_{j}^{\alpha}$, identities $e_{j}$ and composition
$+_{j}$
\item for $i<j$ we have the \emph{middle four-interchange }rule{\footnotesize{}
\[
(a+_{i}b)+_{j}(c+_{i}d)=(a+_{j}c)+_{i}(b+_{j}d)
\]
}{\footnotesize \par}
\end{enumerate}
\end{defn}
An $\mathbf{n}-cell$ in a multiple category of cubical type will
then be called an $n-cube$ whenever $\mathbf{n}$ is the $n^{th}$
ordinal $\{0,...,n-1\}$. In the following sections all multiple categories
are with no mention of cubical type.

We consider double categories over 2-categories. There exist several
different ways to get a double category from a 2-category (see \cite{Fiore}),
our approach takes the \emph{double category of quintets} $Q\mathcal{C}$
of Ehresmann as an inspiring example. By endowing a double category,
in Ehresmann's form, with $n$ arrows in different directions which
turn out to be comonads, we will construct an n-dimensional multiple
category of cubical type. 
\begin{defn}
Given a 2-category $\mathcal{C}$ we denote by $Q\mathcal{C}$ that
double category whose objects are those of $\mathcal{C}$, whose horizontal
and vertical arrows are the 1-cells of $\mathcal{C}$ and whose squares
are the 2-cells $a:K\circ F\rightarrow G\circ J$ in{\footnotesize{}
\[
\xymatrix{\cdot\ar[r]^{F}\ar[d]_{J} & \cdot\ar[d]^{K}\ar@{=>}[dl]|{a}\\
\cdot\ar[r]_{G} & \cdot
}
\]
}{\footnotesize \par}
\end{defn}
It is precisely from $Q\mathcal{C}$ that we take the point of view
of a double cell (a square) as a 2-cell with the same 2-categorical
object in all four nodes, obtaining the 2-categorical description
of Distributive Laws between comonads from Section 3. That is, given
a 2-category $\mathcal{C}$ we consider a double category for which
the diagonal 2-cells are Distributive Laws for comonads in the form
$a:N_{2}N_{1}\rightarrow N_{1}N_{2}$, where we identify $F$ and
$G$ with $N_{1}$ and $V$ and $V$ with $N_{2}$. 
\begin{defn}
Let $DCmd_{\mathbf{2}}(\mathcal{C})$ be the full subcategory of $Q\mathcal{C}$
for which all 1-cells are comonads and all diagonal 2-cells are Distributive
Laws between them. 
\end{defn}
$DCmd_{\mathbf{2}}(\mathcal{C})$, as a double category, is endowed
with horizontal maps in a square{\footnotesize{}
\[
\xymatrix{N_{2}N_{1}C\ar[r]^{aC}\ar[d]_{N_{2}N_{1}u} & N_{1}N_{2}C\ar[d]^{N_{1}N_{2}u}\ar@{=>}[dl]|{au}\\
N_{2}N_{1}C\ar[r]_{aC} & N_{1}N_{2}C
}
\]
}for $C$ an object and $u:C\rightarrow C$ a 1-cell in $\mathcal{C}$.

From that double category we describe a multiple category $DCmd_{\mathbf{n}}(\mathcal{C})$
of Distributive Laws between $n$ comonads by endowing $DCmd_{\mathbf{2}}(\mathcal{C})$
with axis in more directions, as done in \cite{Grandis-Par=0000E9}
Those axis will play the rol of modalities acting over the objects
of $\mathcal{C}$, for that we identify the directions of the axis
with the indexing of the modalities. 

Let us remark that, while $Cmd(Cmd(\mathcal{C}_{op})_{op})$ from
\cite{Street} is a 2-category of Distributive Laws between comonads,
$DCmd_{\mathbf{n}}(\mathcal{C})$ is a multiple category containing
a geometrical (cubical) account of the different compositions of Distributive
Laws that can be considered. In fact, $DCmd_{\mathbf{2}}(\mathcal{C})$
can be seen as the image of $Cmd(Cmd(\mathcal{C}_{op})_{op})$ through
the known as \emph{functor of quintets} 
\[
Q:2-Cat\longrightarrow Dbl
\]
where $Dbl$ denotes the category of all double categories.

We now give a description of $DCmd_{\mathbf{3}}(\mathcal{C})$. Let
$\mathcal{C}$ be a 2-category and $C$ an object in $\mathcal{C}$:\footnote{Although it is not our concern in this paper, $\mathcal{C}$ should
have a bicartesian closed underlying category to get conjunctions,
disjunctions and implications between the objects of $\mathcal{C}$
as well as the requirement that all comonads are \emph{symmetric monoidal
closed} (see \cite{Bierman-de Paiva} for this matter).}
\begin{enumerate}
\item $DCmd_{\emptyset}(\mathcal{C})$ is the category whose objects are
the objects of $\mathcal{C}$
\item $DCmd_{0}(\mathcal{C}),DCmd_{1}(\mathcal{C}),DCmd_{2}(\mathcal{C})$
are the categories whose objects are comonads over $C$ in the horizontal
($\square_{0}$), diagonal ($\square_{1}$) and vertical ($\square_{2}$)
directions respectively endowed with one degeneracy and two faces
for $i=0,1,2$

\begin{center}
$e_{i}:DCmd_{\emptyset}(\mathcal{C})\rightarrow DCmd_{i}(\mathcal{C})\qquad\partial_{i}^{\alpha}:DCmd_{i}(\mathcal{C})\rightarrow DCmd_{\emptyset}(\mathcal{C})$
\par\end{center}
\item $DCmd_{01}(\mathcal{C}),DCmd_{12}(\mathcal{C}),DCmd_{02}(\mathcal{C})$
are the categories whose objects are , from left to right, squares
together with three 2-cells $\ensuremath{d_{10}^{\square}}:\square_{1}\square_{0}\rightarrow\square_{0}\square_{1},d_{21}^{\square}:\square_{2}\square_{1}\rightarrow\square_{1}\square_{2}$
and $d_{20}^{\square}:\square_{2}\square_{0}\rightarrow\square_{0}\square_{2}$

{\footnotesize{}
\[
\xymatrix{\cdot\ar[rr]^{\square_{0}}\ar[dr]_{\square_{1}}\ar@{}[drrr]|{\mathstrut\raisebox{0.2ex}{\textrm{ }\textrm{ }\ensuremath{d_{10}^{\square}}\kern0.3em }} &  & \cdot\ar[dr]^{\square_{1}}\\
 & \cdot\ar[rr]_{\square_{0}} &  & \cdot
}
\qquad\xymatrix{\cdot\ar[rd]^{\square_{1}}\ar[dd]_{\square_{2}}\ar@{}[dddr]|{\mathstrut\raisebox{2ex}{\textrm{ }\textrm{ }\ensuremath{d_{21}^{\square}}\kern0.3em }}\\
 & \cdot\ar[dd]^{\square_{2}}\\
\cdot\ar[rd]_{\square_{1}}\\
 & \cdot
}
\qquad\xymatrix{\cdot\ar[r]^{\square_{0}}\ar[d]_{\square_{2}}\ar@{}[dr]|{\mathstrut\raisebox{0.5ex}{\textrm{ }\textrm{ }\ensuremath{d_{20}^{\square}}\kern0.3em }} & \cdot\ar[d]^{\square_{2}}\\
\cdot\ar[r]_{\square_{0}} & \cdot
}
\]
}respectively behaving as Distributive Laws according to the definition
given in Section 4 and two degeneracies and four faces 

\begin{center}
$e_{i}:DCmd_{j}(\mathcal{C})\rightarrow DCmd_{ij}(\mathcal{C})\qquad e_{j}:DCmd_{i}(\mathcal{C})\rightarrow DCmd_{ij}(\mathcal{C})\qquad\partial_{i}^{\alpha}:DCmd_{ij}(\mathcal{C})\rightarrow DCmd_{j}(\mathcal{C})\qquad\partial_{j}^{\alpha}:DCmd_{ij}(\mathcal{C})\rightarrow DCmd_{i}(\mathcal{C})$
\par\end{center}

\noindent for $i,j=0,1,2$ such that $i<j$ and $\alpha=0,1$.\footnote{Faces and degeneracies act respectively as \emph{erasing} and \emph{introducing}
a new modality in the system.}
\item $DCmd_{012}(\mathcal{C})$ is a category whose objects are 3-cubes
each face of which comes with a diagonal 2-cell in it.

For the comonads $\square_{0},\square_{1},\square_{2}$ defined in
the same object $C$ in the 2-category $\mathcal{C}$, and the directions
{\footnotesize{}
\[
\xymatrix{\cdot\ar[r]^{0}\ar[dr]_{1}\ar[d]_{2} & \mbox{}\\
\mbox{} & \mbox{}
}
\]
}the 3-cells in $\mathcal{C}_{012}$ are cubes as given at left, each
face containing a Distributive Law as given at right:

{\scriptsize{}\hspace{3em}\xymatrix{\cdot\ar[rr]^{0}\ar[dd]_{2}\ar[dr]^{1} & \mbox{} & \cdot\ar'[d][dd]^{2}\ar[dr]^{1}\\\mbox{} & \cdot\ar'[rr]^(.35){0}\ar[dd]_(.35){2} &  & \cdot\ar[dd]^{2}\\\cdot\ar'[r][rr]_{0}\ar[dr]_{1} &  & \cdot\ar[dr]^{1}\\ & \cdot\ar[rr]_{0} &  & \cdot} \hspace{3em} \xymatrix{ & \cdot\ar[r] & \cdot\\\cdot\ar@{=>}[dr]|{d_{21}^{\square}}\ar[d] & \cdot\ar[u]\ar[d]\ar[r]\ar[l] & \cdot\ar[u]\ar@{=>}[ul]|{d_{20}^{\square}}\ar@{=>}[dr]|{d_{21}^{\square}}\ar[d]\ar@{=>}[dl]|{d_{10}^{\square}} & \cdot\ar[d]\ar[l]\\\cdot & \cdot\ar[l]\ar[r] & \cdot\ar[r] & \cdot\\ & \cdot\ar[r]\ar[u] & \cdot\ar@{=>}[ul]|{d_{10}^{\square}}\ar[u]\\ & \cdot\ar[u]\ar[r] & \cdot\ar[u]\ar@{=>}[ul]|{d_{20}^{\square}}}}{\scriptsize \par}
\end{enumerate}
{\scriptsize{}}

{\scriptsize \par}Now we describe how $i;j$ subindexes are defined from a cubical set
structure performing \emph{concatenation}. With them we express the
multiple relations between the modalities in cubical form. Every subset
of a multi-index set of ordinals $\mathbf{n}$ can be seen as a subindex
for a comonad. Concatenation of modalities are described by paths
of edges of hypercubes whose dimension is that of the number of modalities
we are dealing with. This is geometrically expressed by chains of
hypercubes for which every kind of arrow refers to a different modality
by being oriented in a different direction.\footnote{In \cite{Dosen-Petric} there is an interpretation of modal logics
with one and two operators in terms of relations between the length
of a composite of these modalities.} 

Our modalities (and the nodes of the sets into a multiple cubical
category) will then be $\square_{i_{1}}...\square_{i_{n}}=\square_{i_{1};...;i_{n}}$
for $i_{1},...,i_{n}\in\mathbf{n}$ such that $i_{1}<...<i_{n}$.\footnote{From a logical perspective, subindexes in the form of a concatenation
(a semicolon chain) describes the path to reach a certain \emph{place
of observation} or \emph{place of knowledge}.}

We now show how can one compose Distributive Laws for comonads over
a 2-category in $DCmd(\mathcal{C})$. These compositions are expressed
here as a concatenation of 2-cubes in a multiple category of cubical
type. We make use of the hypercube notation for compositions as introduced
in \cite{Grandis-Par=0000E9}.

For instance, directed compositions in $DCmd_{\mathbf{3}}(\mathcal{C})$,
given the $0,1,2$ axis as above, are{\footnotesize{}
\[
\begin{array}{c}
d_{10}^{\square}+_{0}d_{10}^{\square}=d_{1(00)}^{\square}\\
d_{10}^{\square}+_{1}d_{10}^{\square}=d_{(11)0}^{\square}\\
d_{20}^{\square}+_{0}d_{20}^{\square}=d_{2(00)}^{\square}
\end{array}\qquad\begin{array}{c}
d_{20}^{\square}+_{2}d_{20}^{\square}=d_{(22)0}^{\square}\\
d_{21}^{\square}+_{1}d_{21}^{\square}=d_{2(11)}^{\square}\\
d_{21}^{\square}+_{2}d_{21}^{\square}=d_{(22)1}^{\square}
\end{array}
\]
}where we express $d_{i(jj)}^{\square}$ for $d_{i;(j;j)}^{\square}$.
In $DCmd_{\mathbf{n}}(\mathcal{C})$ we have, for $i,j\in\mathbf{n}$
such that $i\geq j$:{\footnotesize{}
\[
\begin{array}{c}
d_{ij}^{\square}+_{j}d_{ij}^{\square}=d_{i(jj)}^{\square}\\
d_{ij}^{\square}+_{i}d_{ij}^{\square}=d_{(ii)j}^{\square}
\end{array}
\]
}Expressing $(d\mid d')$ and $\left(\frac{d}{d'}\right)$ for $d+_{0}d'$
and $d+_{1}d'$ respectively in $DCmd_{\mathbf{2}}(\mathcal{C})$
we have for {\footnotesize{}
\[
\xymatrix{\cdot\ar[r]^{N_{1}}\ar[d]_{N_{2}} & \cdot\ar[d]\ar[r]^{N'_{1}}\ar@{=>}[dl]|{d_{N_{2}N_{1}}^{\square}} & \cdot\ar[d]^{N_{2}}\ar@{=>}[dl]|{d_{N_{2}N'_{1}}^{\square}}\\
\cdot\ar[d]_{N'_{2}}\ar[r] & \cdot\ar[d]\ar[r]\ar@{=>}[dl]|{d_{N'_{2}N_{1}}^{\square}} & \cdot\ar[d]^{N_{2}'}\ar@{=>}[dl]|{d_{N'_{2}N'_{1}}^{\square}}\\
\cdot\ar[r]_{N_{1}} & \cdot\ar[r]_{N'_{1}} & \cdot
}
\]
}{\footnotesize \par}

\noindent horizontal and vertical compositions 
\[
(d_{N_{2}N_{1}}^{\square}\mid d_{N_{2}N_{1}'}^{\square})=N_{1}'d_{N_{2}N_{1}}^{\square}\cdot d_{N_{2}N_{1}'}^{\square}N_{1}\qquad\left(\frac{d_{N'_{2}N_{1}}^{\square}}{d_{N_{2}N_{1}}^{\square}}\right)=d_{N_{2}N_{1}}^{\square}N'_{2}\cdot N_{2}d_{N'_{2}N_{1}}^{\square}
\]
both being strict thanks to the 2-categorical structure of $\mathcal{C}$.
Finally, we have the \emph{four-middle interchange rule}:
\[
\left(\frac{d_{N_{2}N_{1}}^{\square}\mid d_{N_{2}N_{1}'}^{\square}}{d_{N'_{2}N_{1}}^{\square}\mid d_{N'_{2}N_{1}'}^{\square}}\right)=\left(\frac{d_{N{}_{2}N_{1}}^{\square}}{d_{N'_{2}N_{1}}^{\square}}\mid\frac{d_{N{}_{2}N'_{1}}^{\square}}{d_{N'_{2}N'_{1}}^{\square}}\right)
\]
for \emph{single Distributive Laws}.\footnote{We also have some singular instances of 2-cubes such as{\footnotesize{}
\[
\xymatrix{\cdot\ar[r]^{N_{i}}\ar[d]_{N_{i}} & \cdot\ar[d]^{N_{i}}\ar@{=>}[dl]|{1_{N_{i}N_{i}}^{\square}}\\
\cdot\ar[r]_{N_{i}} & \cdot
}
\qquad\xymatrix{\cdot\ar[r]^{N_{i}}\ar[d]_{1} & \cdot\ar[d]^{1}\ar@{=>}[dl]|{d_{N_{i}}^{\square}}\\
\cdot\ar[r]_{N_{i}} & \cdot
}
\qquad\xymatrix{\cdot\ar[r]^{1}\ar[d]_{N_{i}} & \cdot\ar[d]^{N_{i}}\ar@{=>}[dl]|{d_{N_{i}}^{\square}}\\
\cdot\ar[r]_{1} & \cdot
}
\]
}They are instances of the so-called \emph{special iso-cells} in \cite{Grandis-Par=0000E9}
and give rise to the $K_{i}$ axioms characterizing the normal multimodal
systems (see Definition \ref{normal}).} 

The following is our first \emph{coherence Lemma} for the interaction
of n necessity modalities. 
\begin{lem}
Every composition of 2-cells of the form $d^{\square}$ in $DCmd_{\mathbf{n}}(\mathcal{C})$
is a Distributive Law. 
\end{lem}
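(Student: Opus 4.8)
The plan is to argue by induction on the number of atomic $2$-cells occurring in a given composite. The base case consists of the generating cells: each $d_{ij}^{\square}$ is a Distributive Law by the very definition of the $3$-cells of $DCmd_{\mathbf{n}}(\mathcal{C})$, and the special iso-cells $1_{N_{i}N_{i}}^{\square}$ and $d_{N_{i}}^{\square}$ of the footnote satisfy the axioms trivially (they are degeneracies and identities). By the structure of a multiple category of cubical type, every composite $2$-cell is obtained from these atoms by iterated application of the directed compositions $+_{i}$; and since a $2$-cell spans only two axes, each such $+_{i}$ restricts, in the relevant two-dimensional slice, to one of the horizontal pasting $(d\mid d')$ or the vertical pasting $\left(\frac{d}{d'}\right)$ whose explicit formulas are recorded above. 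It therefore suffices to prove that these two elementary pastings carry pairs of Distributive Laws to Distributive Laws, after which the inductive step is immediate.

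Thus the heart of the argument is to verify, for the horizontal composite $(d_{N_{2}N_{1}}^{\square}\mid d_{N_{2}N_{1}'}^{\square})=N_{1}'d_{N_{2}N_{1}}^{\square}\cdot d_{N_{2}N_{1}'}^{\square}N_{1}$, the four coherence axioms of a Distributive Law between the comonad $N_{2}$ and the composite comonad $N_{1}N_{1}'$: the two involving the counit and comultiplication of $N_{2}$, and the two involving those of $N_{1}N_{1}'$. First I would recall that $N_{1}N_{1}'$ carries a canonical comonad structure assembled from $\delta_{N_{1}},\delta_{N_{1}'}$ and the Distributive Law relating $N_{1}$ and $N_{1}'$ already present in $DCmd_{\mathbf{n}}(\mathcal{C})$. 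Then each axiom, after whiskering, reduces to a pasting diagram that is discharged by invoking the corresponding axiom of $d_{N_{2}N_{1}}^{\square}$ and of $d_{N_{2}N_{1}'}^{\square}$ separately and gluing the two along the interchange law of $\mathcal{C}$. The vertical composite is treated symmetrically, distributing the composite comonad $N_{2}'N_{2}$ past $N_{1}$. Because every identity used is a strict consequence of associativity, unitality, naturality and middle-four interchange in the $2$-category $\mathcal{C}$, no coherence morphisms intervene and the computations are purely formal; this is also the point at which Street's identification of a Distributive Law with a comonad in $Cmd(\mathcal{C})$ could be invoked to package the bookkeeping.

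Finally I would appeal to the middle-four-interchange rule $\left(\frac{d_{N_{2}N_{1}}^{\square}\mid d_{N_{2}N_{1}'}^{\square}}{d_{N_{2}'N_{1}}^{\square}\mid d_{N_{2}'N_{1}'}^{\square}}\right)=\left(\frac{d_{N_{2}N_{1}}^{\square}}{d_{N_{2}'N_{1}}^{\square}}\mid\frac{d_{N_{2}N_{1}'}^{\square}}{d_{N_{2}'N_{1}'}^{\square}}\right)$, already valid by the $2$-categorical structure, to ensure that the outcome of the induction does not depend on the order in which horizontal and vertical pastings are carried out; this well-definedness is what makes the inductive claim meaningful. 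The step I expect to be the genuine obstacle is the comultiplication axiom for the composite laws: there the comonad structure on $N_{1}N_{1}'$ forces one to commute $d_{N_{2}N_{1}'}^{\square}$ and $d_{N_{2}N_{1}}^{\square}$ past the Distributive Law relating $N_{1}$ and $N_{1}'$, so that a Yang--Baxter (hexagon) compatibility among the three Distributive Laws involved must be shown to hold. For composites confined to a single square this compatibility is vacuous, but for composites spanning a $3$-cube it is precisely the condition encoded by the commuting faces of $DCmd_{012}(\mathcal{C})$, and confirming that the pasted cube continues to satisfy it is where the real checking lies.
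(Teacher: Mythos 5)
Your proposal follows essentially the same route as the paper: its proof likewise reduces everything to the binary horizontal composite $(d_{ki}^{\square}\mid d_{kj}^{\square})$ and the binary vertical composite $\left(\frac{d_{li}^{\square}}{d_{ki}^{\square}}\right)$, and checks the comonad-functor conditions (counit, comultiplication, $\varepsilon$, $\delta$) by pasting the commuting diagrams of the two constituent laws and appealing to naturality of composition, exactly as you outline. Your explicit induction wrapper and your flagging of the Yang--Baxter compatibility required for the comonad structure on the composite $\square_{j}\square_{i}$ are points the paper leaves implicit, but the core argument coincides.
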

\begin{proof}
Take $i,j,k\in\mathbf{n}$ such that $k\geq j\geq i$. Having already
identities and associativity, for {\footnotesize{}
\[
\xymatrix{\cdot\ar[r]\ar[d] & \cdot\ar@{=>}[dl]|{d_{ki}^{\square}}\ar[r]\ar[d] & \cdot\ar[d]\ar@{=>}[dl]|{d_{kj}^{\square}}\\
\cdot\ar[r]\ar[d] & \cdot\ar@{=>}[dl]|{d_{li}^{\square}}\ar[r]\ar[d] & \cdot\\
\cdot\ar[r] & \cdot
}
\]
}we can compose

\begin{enumerate}
\item $(d_{ki}^{\square}\mid d_{kj}^{\square}):\square_{k}(\square_{j}\square_{i})\rightarrow(\square_{j}\square_{i})\square_{k}$
defined as $\square_{j}d_{ki}^{\square}\cdot d_{kj}^{\square}\square_{i}$
and for which we have, according to the 2-categorical definition of
Distributive Law of \cite{Street}, a pair 
\[
((C,\square_{j}\square_{i}),(\square_{k},(d_{kj}^{\square}\mid d_{ki}^{\square})))
\]
This is based on two Distributive Laws $d_{kj}^{\square}$ and $d_{ki}^{\square}$
seen as comonad functors:

\begin{enumerate}
\item as $d_{kj}^{\square}$ and $d_{ki}^{\square}$ are Distributive Laws
we have the following commuting diagrams {\footnotesize{}
\[
\xymatrix{ & \square_{k}\square_{i}\ar[dl]_{\varepsilon_{k}\square_{i}}\ar[dd]^{d_{ki}^{\square}}\\
\square_{i}\\
 & \square_{i}\square_{k}\ar[ul]^{\square_{i}\varepsilon_{k}}
}
\qquad\xymatrix{ & \square_{k}\square_{j}\ar[dl]_{\varepsilon_{k}\square_{j}}\ar[dd]^{d_{kj}^{\square}}\\
\square_{j}\\
 & \square_{j}\square_{k}\ar[ul]^{\square_{j}\varepsilon_{k}}
}
\]
}and then {\footnotesize{}
\[
\xymatrix{ &  & \square_{k}\square_{j}\square_{i}\ar[ddll]_{\varepsilon_{k}\square_{j}\square_{i}}\ar[dd]^{d_{kj}^{\square}\square_{i}}\ar@/^{4pc}/[dddd]^{(d_{kj}^{\square}\mid d_{ki}^{\square})}\\
\\
\square_{j}\square_{i} &  & \square_{j}\square_{k}\square_{i}\ar[ll]_{\square_{j}\varepsilon_{k}\square_{i}}\ar[dd]^{\square_{j}d_{ki}^{\square}}\\
\\
 &  & \square_{j}\square_{i}\square_{k}\ar[lluu]^{\square_{j}\square_{i}\varepsilon_{k}}
}
\]
}commutes. 

\item On the other hand from {\tiny{}
\[
\xymatrix{\square_{j}\square_{k}\square_{i}\ar[r]^{\square_{i}\delta_{k}\square_{i}} & \square_{j}\square_{k}\square_{k}\square_{i}\ar[dr]^{\square_{j}\square_{k}d_{ki}^{\square}}\\
 &  & \square_{j}\square_{k}\square_{i}\square_{k}\\
\square_{j}\square_{i}\square_{k}\ar[uu]^{\square_{j}d_{ik}^{\square}}\ar[r]_{\square_{j}\square_{i}\delta_{k}} & \square_{j}\square_{i}\square_{k}\square_{k}\ar[ru]_{\square_{j}d_{ik}^{\square}\square_{k}}
}
\qquad\xymatrix{\square_{k}\square_{j}\square_{i}\ar[r]^{\delta_{k}\square_{j}\square_{i}} & \square_{k}\square_{k}\square_{j}\square_{i}\ar[dr]^{\square_{k}d_{kj}^{\square}\square_{i}}\\
 &  & \square_{k}\square_{j}\square_{k}\square_{i}\\
\square_{j}\square_{k}\square_{i}\ar[uu]^{d_{jk}^{\square}\square_{i}}\ar[r]_{\square_{j}\delta_{k}\square_{k}} & \square_{j}\square_{k}\square_{k}\square_{i}\ar[ru]_{d_{jk}^{\square}\square_{k}\square_{i}}
}
\]
}standing for $\square_{j}d_{ki}^{\square}$ and $d_{kj}^{\square}\square_{i}$,
we obtain{\footnotesize{}
\[
\xymatrix{\square_{k}\square_{j}\square_{i}\ar[r]^{\delta_{k}\square_{j}\square_{i}} & \square_{k}\square_{k}\square_{j}\square_{i}\ar[dr]^{\square_{k}d_{kj}^{\square}\square_{i}}\\
 &  & \square_{k}\square_{j}\square_{k}\square_{i}\ar[dr]^{\square_{k}\square_{j}d_{ki}^{\square}}\ar@{}[dd]|{\mathstrut\raisebox{2ex}{\textrm{ }\textrm{ }\ensuremath{(\alpha)}\kern0.3em }}\\
\square_{j}\square_{k}\square_{i}\ar[uu]^{d_{jk}^{\square}\square_{i}}\ar[r]_{\square_{j}\delta_{k}\square_{k}} & \square_{j}\square_{k}\square_{k}\square_{i}\ar[ru]^{d_{jk}^{\square}\square_{k}\square_{i}}\ar[rd]_{\square_{j}\square_{k}d_{ki}^{\square}} &  & \square_{k}\square_{j}\square_{i}\square_{k}\\
 &  & \square_{j}\square_{k}\square_{i}\square_{k}\ar[ru]_{d_{jk}^{\square}\square_{i}\square_{k}}\\
\square_{j}\square_{i}\square_{k}\ar[uu]^{\square_{j}d_{ik}^{\square}}\ar[r]_{\square_{j}\square_{i}\delta_{k}} & \square_{j}\square_{i}\square_{k}\square_{k}\ar[ru]_{\square_{j}d_{ik}^{\square}\square_{k}}
}
\]
}for the second condition of a comonad functor where the square $(\alpha)$
commutes by naturality of composition.
\item We need to show also that there exists a comonad natural transformation
\[
\varepsilon:(\square_{k},(d_{kj}^{\square}\mid d_{ki}^{\square}))\rightarrow1
\]
from comonad natural transformations $\varepsilon_{k}:(\square_{k},d_{kj}^{\square})\rightarrow1$
and $\varepsilon'_{k}:(\square_{k},d_{ki}^{\square})\rightarrow1$
subject to the commuting squares 
\[
\xymatrix{\square_{k}\square_{j}\ar[r]^{\varepsilon_{k}\square_{j}}\ar[d]_{d_{kj}^{\square}} & \square_{j}\ar[d]^{1}\\
\square_{j}\square_{k}\ar[r]_{\square_{j}\varepsilon_{k}} & \square_{j}
}
\qquad\xymatrix{\square_{k}\square_{i}\ar[r]^{\varepsilon'_{k}\square_{i}}\ar[d]_{d_{ki}^{\square}} & \square_{i}\ar[d]^{1}\\
\square_{i}\square_{k}\ar[r]_{\square_{i}\varepsilon'_{k}} & \square_{i}
}
\]
from which we get 
\[
\xymatrix{\square_{k}\square_{j}\square_{i}\ar[r]^{\varepsilon\square_{j}\square_{i}}\ar[d]_{d_{kj}^{\square}\square_{i}}\ar@/_{4pc}/[dd]_{(d_{kj}^{\square}\mid d_{ki}^{\square})} & \square_{j}\square_{i}\ar[d]^{1}\\
\square_{j}\square_{k}\square_{i}\ar[d]_{\square_{j}d_{ki}^{\square}}\ar[r] & \square_{j}\square_{i}\ar[d]^{1}\\
\square_{j}\square_{i}\square_{k}\ar[r]_{\square_{j}\square_{i}\varepsilon} & \square_{j}\square_{i}
}
\]
commuting for $\varepsilon$.
\item Finally, for duplication $\delta$ we have duplications {\small{}
\[
\delta^{kj}:(\square_{k},d_{kj}^{\square})\rightarrow(\square_{k},d_{kj}^{\square})\cdot(\square_{k},d_{kj}^{\square})\textrm{ and }\delta^{ki}:(\square_{k},d_{ki}^{\square})\rightarrow(\square_{k},d_{ki}^{\square})\cdot(\square_{k},d_{ki}^{\square})
\]
}in the form {\footnotesize{}
\[
\delta^{kj}:(\square_{k},d_{kj}^{\square})\rightarrow(\square_{k}\square_{k},d_{kj}^{\square}\square_{k}\cdot\square_{k}d_{kj}^{\square})\textrm{ and }\delta^{ki}:(\square_{k},d_{ki}^{\square})\rightarrow(\square_{k}\square_{k},d_{ki}^{\square}\square_{k}\cdot\square_{k}d_{ki}^{\square})
\]
}respectively for which 
\[
\xymatrix{\square_{k}\square_{j}\ar[r]^{\delta^{kj}\square_{j}}\ar[d]_{d_{kj}^{\square}} & \square_{k}\square_{k}\square_{j}\ar[d]^{d_{kj}^{\square}\square_{k}\cdot\square_{k}d_{kj}^{\square}}\\
\square_{j}\square_{k}\ar[r]_{\square_{j}\delta^{j}} & \square_{j}\square_{k}\square_{k}
}
\qquad\xymatrix{\square_{k}\square_{i}\ar[r]^{\delta^{ki}\square_{i}}\ar[d]_{d_{ki}^{\square}} & \square_{k}\square_{k}\square_{i}\ar[d]^{d_{ki}^{\square}\square_{k}\cdot\square_{k}d_{ki}^{\square}}\\
\square_{i}\square_{k}\ar[r]_{\square_{i}\delta^{ki}} & \square_{i}\square_{k}\square_{k}
}
\]
commute. From them we get a duplication {\small{}
\[
\delta:(\square_{k},(d_{kj}^{\square}\mid d_{ki}^{\square}))\rightarrow(\square_{k},(d_{kj}^{\square}\mid d_{ki}^{\square}))\cdot(\square_{k},(d_{kj}^{\square}\mid d_{ki}^{\square}))
\]
}in the form {\small{}
\[
\delta:(\square_{k},(d_{kj}^{\square}\mid d_{ki}^{\square}))\rightarrow(\square_{k}\square_{k},[(\square_{k}(d_{kj}^{\square}\mid d_{ki}^{\square}))\cdot((d_{kj}^{\square}\mid d_{ki}^{\square})\square_{k})])
\]
}and a commuting pasting square{\footnotesize{}
\[
\xymatrix{\square_{k}\square_{j}\square_{i}\ar[r]^{\delta^{kj}\square_{j}\square_{i}}\ar[d]_{d_{kj}^{\square}\square_{i}} & \square_{k}\square_{k}\square_{j}\square_{i}\ar[d]_{d_{kj}^{\square}\square_{k}\cdot\square_{k}d_{kj}^{\square}\square_{i}}\ar@/^{4pc}/[dd]^{[(\square_{k}(d_{kj}^{\square}\mid d_{ki}^{\square}))\cdot((d_{kj}^{\square}\mid d_{ki}^{\square})\square_{k})]}\\
\square_{j}\square_{k}\square_{i}\ar[d]_{\square_{j}d_{kj}^{\square}}\ar[r]^{\square_{j}\delta^{kj}\square_{i}} & \square_{j}\square_{k}\square_{k}\square_{i}\ar[d]_{\square_{j}d_{ki}^{\square}\square_{k}\cdot\square_{k}d_{ki}^{\square}}\\
\square_{j}\square_{i}\square_{k}\ar[r]_{\square_{j}\square_{i}\delta^{ki}} & \square_{j}\square_{i}\square_{k}\square_{k}
}
\]
}{\footnotesize \par}
\end{enumerate}
\item We had analogous diagrams for vertical compositions 
\[
\left(\frac{d_{li}^{\square}}{d_{ki}^{\square}}\right):\square_{l}\square_{k}\square_{i}\rightarrow\square_{i}\square_{l}\square_{k}
\]
seen as comonad functors 
\[
((C,\square_{k}\square_{i}),(\square_{l},\left(\frac{d_{li}^{\square}}{d_{ki}^{\square}}\right)))
\]
and given by $d_{li}^{\square}\square_{k}\cdot\square_{l}d_{ki}^{\square}$.
That is,

\begin{enumerate}
\item from {\scriptsize{}
\[
\xymatrix{ & \square_{k}\square_{i}\ar[dl]_{\square_{i}\varepsilon_{k}}\ar[dd]^{d_{ki}^{\square}}\\
\square_{i}\\
 & \square_{i}\square_{k}\ar[ul]^{\varepsilon_{k}\square_{i}}
}
\qquad\xymatrix{ & \square_{k}\square_{j}\ar[dl]_{\square_{j}\varepsilon_{k}}\ar[dd]^{d_{kj}^{\square}}\\
\square_{j}\\
 & \square_{j}\square_{k}\ar[ul]^{\varepsilon_{k}\square_{j}}
}
\]
}we get {\scriptsize{}
\[
\xymatrix{ &  & \square_{l}\square_{k}\square_{i}\ar[ddll]_{\varepsilon_{k}\square_{j}\square_{i}}\ar[dd]^{d_{kj}^{\square}\square_{i}}\ar@/^{4pc}/[dddd]^{\left(\frac{d_{li}^{\square}}{d_{ki}^{\square}}\right)}\\
\\
\square_{l}\square_{k} &  & \square_{l}\square_{i}\square_{k}\ar[ll]_{\square_{j}\varepsilon_{k}\square_{i}}\ar[dd]^{\square_{j}d_{ki}^{\square}}\\
\\
 &  & \square_{i}\square_{l}\square_{k}\ar[lluu]^{\square_{j}\square_{i}\varepsilon_{k}}
}
\]
}{\scriptsize \par}
\item and from {\scriptsize{}
\[
\xymatrix{\square_{k}\square_{i}\square_{i}\ar[r]^{d_{ki}^{\square}\square_{i}} & \square_{i}\square_{k}\square_{i}\ar[dr]^{\square_{i}d_{ki}^{\square}}\\
 &  & \square_{i}\square_{i}\square_{k}\\
\square_{k}\square_{i}\ar[uu]^{\square_{k}\delta_{i}}\ar[r]_{d_{ki}^{\square}} & \square_{i}\square_{k}\ar[ru]_{\delta_{i}\square_{k}}
}
\qquad\xymatrix{\square_{l}\square_{i}\square_{i}\ar[r]^{d_{li}^{\square}\square_{i}} & \square_{i}\square_{l}\square_{i}\ar[dr]^{\square_{i}d_{li}^{\square}}\\
 &  & \square_{i}\square_{i}\square_{l}\\
\square_{l}\square_{i}\ar[uu]^{\square_{l}\delta_{i}}\ar[r]_{d_{li}^{\square}} & \square_{i}\square_{l}\ar[ru]_{\delta_{i}\square_{l}}
}
\]
}we can construct the following commuting diagram{\scriptsize{}
\[
\xymatrix{\square_{l}\square_{k}\square_{i}\square_{i}\ar[r]^{\square_{l}d_{ki}^{\square}\square_{i}}\ar@/^{3pc}/[rr]^{\left(\frac{d_{li}^{\square}}{d_{ki}^{\square}}\right)\square_{i}} & \square_{l}\square_{i}\square_{k}\square_{i}\ar[r]^{d_{li}^{\square}\square_{k}\square_{i}}\ar[dd]_{\square_{l}\square_{i}d_{ki}^{\square}} & \square_{i}\square_{l}\square_{k}\square_{i}\ar[dd]^{\square_{i}\square_{l}d_{ki}}\ar[rdd]^{\square_{i}\left(\frac{d_{li}^{\square}}{d_{ki}^{\square}}\right)}\ar@{}[ddl]|{\mathstrut\raisebox{2ex}{\textrm{ }\textrm{ }\ensuremath{(\beta)}\kern0.3em }}\\
\\
 & \square_{l}\square_{i}\square_{i}\square_{k}\ar[r]_{d_{li}^{\square}\square_{i}\square_{k}} & \square_{i}\square_{l}\square_{i}\square_{k}\ar[r]_{\square_{i}d_{li}^{\square}\square_{k}} & \square_{i}\square_{i}\square_{l}\square_{k}\\
\\
\square_{l}\square_{k}\square_{i}\ar[uuuu]^{\square_{l}\square_{k}\delta_{i}}\ar[r]_{\square_{l}d_{ki}^{\square}}\ar@/_{3pc}/[rr]_{\left(\frac{d_{li}^{\square}}{d_{ki}^{\square}}\right)} & \square_{l}\square_{i}\square_{k}\ar[uu]^{\square_{l}\delta_{i}\square_{k}}\ar[r]_{d_{li}^{\square}\square_{k}} & \square_{i}\square_{l}\square_{k}\ar[ruu]_{\delta_{i}\square_{l}\square_{k}}
}
\]
}for the second condition of a comonad functor where the square $(\beta)$
commutes by naturality of composition.
\end{enumerate}
For $\varepsilon$ and $\delta$ we compute as above.
\end{enumerate}
\end{proof}
We are in this way obtaining a Distributive Law for three modalities
in the form 
\[
\left(\frac{d_{N{}_{2}N{}_{1}}^{\square}}{d_{N_{3}N_{1}}^{\square}\mid d_{N_{3}}^{\square}}\right)=\left(\frac{d_{N_{2}N_{1}}^{\square}}{d_{N_{3}N_{1}}^{\square}}\mid d_{N{}_{3}}^{\square}\right):(N_{3}N_{2})N_{1}\rightarrow N_{1}(N_{3}N_{2})
\]

\section{The symmetric version}

To get the expressivity of the different Multimodal systems we need
all modalities interact. This is obtained after considering hypercubes
where a number of Distributive Laws exist in such a way that every
chain of modalities becomes a new modality in its own right. For that
we consider \emph{transposition functors} between the different multiple
sets underlying $DCmd_{\mathbf{n}}(\mathcal{C})$, as known from the
\emph{Symmetric Group Theory}, to the permutations among all axis
in the hypercube.
\begin{defn}
\label{symmetric}A \emph{multiple category of symmetric cubical type}
is a multiple category of cubical type with an assigned action of
the symmetric group $S_{n}$ on each set $X_{\mathbf{i}}$ for $\mathbf{i}$
a multi-index with length $n$ generated by \emph{transpositions}
$s_{i}:X_{\mathbf{i}}\rightarrow X_{\mathbf{i}}$ switching the $i-$
and $(i-1)-$ axis for $i=1,...,n-1$.\footnote{For $n<2$ we have no transpositions so we work from now on with $n\geq2$.}
\end{defn}
We denote by $S_{n}$ the n-symmetric group. Since $s_{i}$, as 2-cycles,
generate all elements in $S_{n}$ we have all permutations of coordinates
available to combine the different modalities. For example, for the
case of $n=2$ we can permute all axis in every square and every cube
giving rise to, for instance, to {\footnotesize{}
\[
\xymatrix{\cdot\ar[r]^{1}\ar[d]_{0} & \cdot\ar[r]^{2}\ar[d]^{0} & \cdot\ar[d]^{0}\\
\cdot\ar[r]_{1} & \cdot\ar[r]_{2} & \cdot
}
\]
}obtained by composition of coordinate systems 

{\footnotesize{}
\[
\xymatrix{\cdot\ar[r]^{0}\ar[d]_{2}\ar[dr]_{1} & \mbox{}\\
\mbox{} & \mbox{}
}
\qquad\xymatrix{\cdot\ar[r]^{0}\ar[d]_{1}\ar[dr]_{2} & \mbox{}\\
\mbox{} & \mbox{}
}
\]
} 

\noindent in the direction of axis $1$. That is, in the notation
of \cite{Grandis-Par=0000E9} for \emph{oriented compositions} and
making use of sequential composition of modalities from Section 1:
\[
d_{01}^{\square}+_{1}d_{02}^{\square}=d_{0(21)}^{\square}
\]

For the calculation done we note also that, in the presence of transpositions,
we do not have just Distributive Laws in the form $d_{ji}^{\square}$
for $j\geq i$ but a Distributive Law for every pair of indices no
matter which one the greater is. When ordering the subindexes according
to the order in $\mathbf{n}$ we get an \emph{indexing normal form}.

We now show how transpositions interact with composition in different
directions:{\footnotesize{}
\[
\begin{array}{c}
\begin{array}{c}
s_{i}(a+_{i}b)=s_{i}(a)+_{i-1}s_{i}(b)\qquad s_{i}(a+_{i-1}b)=s_{i}(a)+_{i}s_{i}(b)\\
s_{i}(a+_{j}b)=s_{i}(a)+_{j}s_{i}(b)\quad for\;j\neq i-1,i
\end{array}\end{array}
\]
}that is, for $n=3$ we have three modalities $0,1,2$ and two transpositions
$s_{1},s_{2}$ such that {\footnotesize{}
\[
\begin{array}{c}
\begin{array}{c}
\begin{array}{c}
s_{1}(a+_{0}b)=s_{1}(a)+_{1}s_{1}(b)\\
s_{1}(a+_{1}b)=s_{1}(a)+_{0}s_{1}(b)\\
s_{1}(a+_{2}b)=s_{1}(a)+_{2}s_{1}(b)
\end{array}\end{array}\qquad\begin{array}{c}
\begin{array}{c}
s_{2}(a+_{0}b)=s_{2}(a)+_{0}s_{2}(b)\\
s_{2}(a+_{1}b)=s_{2}(a)+_{2}s_{2}(b)\\
s_{2}(a+_{2}b)=s_{2}(a)+_{1}s_{2}(b)
\end{array}\end{array}\end{array}
\]
}which means for example{\tiny{}
\[
s_{1}\left(\vcenter{\xymatrix{\cdot\ar[r]^{1}\ar[d]_{2} & \cdot\ar[r]^{1} & \cdot\ar[d]^{2}\ar@{}[dll]|{\mathstrut\raisebox{1ex}{\textrm{ }\textrm{ }\ensuremath{\textrm{ }\ensuremath{a+_{1}b}\kern0.3em }\kern0.3em }}\\
\cdot\ar[r]_{1} & \cdot\ar[r]_{1} & \cdot
}
}\right)=\vcenter{\xymatrix{\cdot\ar[r]^{0}\ar[d]_{2} & \cdot\ar[r]^{0} & \cdot\ar[d]^{2}\ar@{}[dll]|{\mathstrut\raisebox{1ex}{\textrm{ }\textrm{ }\ensuremath{s_{1}(a)+_{0}s_{1}(b)}\kern0.3em }}\\
\cdot\ar[r]_{0} & \cdot\ar[r]_{0} & \cdot
}
}=\left[s_{1}\left(\vcenter{\xymatrix{\cdot\ar[r]^{1}\ar[d]_{2} & \cdot\ar[d]^{2}\ar@{}[dl]|{\mathstrut\raisebox{1ex}{\textrm{ }\textrm{ }\ensuremath{a}\kern0.3em }}\\
\cdot\ar[r]_{1} & \cdot
}
}\right)\right]+_{0}\left[s_{1}\left(\vcenter{\xymatrix{\cdot\ar[r]^{1}\ar[d]_{2} & \cdot\ar[d]^{2}\ar@{}[dl]|{\mathstrut\raisebox{1ex}{\textrm{ }\textrm{ }\ensuremath{b}\kern0.3em }}\\
\cdot\ar[r]_{1} & \cdot
}
}\right)\right]
\]
}{\tiny \par}

That is:
\[
s_{1}(\square_{2}\square_{1}\square_{1}\rightarrow\square_{1}\square_{1}\square_{2})=\square_{2}\square_{0}\square_{0}\rightarrow\square_{0}\square_{0}\square_{2}
\]

\begin{defn}
Let $SDCmd_{\mathbf{n}}(\mathcal{C})$ be the multiple category of
symmetric cubical type generated by adding transpositions to $DCmd_{n}(\mathcal{C})$.
\end{defn}
Transpositions act over Distributive Laws in $DCmd_{\mathbf{n}}(\mathcal{C})$
by switching the comonads. Therefore, whenever $n=2$, we get from
a 2-cell as the one at left a 2-cell as the one at right: {\footnotesize{}
\[
\xymatrix{\cdot\ar[r]^{\square_{0}}\ar[d]_{\square_{1}} & \cdot\ar[d]^{\square_{1}}\ar@{=>}[dl]|{d_{10}^{\square}}\\
\cdot\ar[r]_{\square_{0}} & \cdot
}
\qquad\xymatrix{\cdot\ar[r]^{\square_{1}}\ar[d]_{\square_{0}} & \cdot\ar[d]^{\square_{0}}\ar@{=>}[dl]|{d_{01}^{\square}}\\
\cdot\ar[r]_{\square_{1}} & \cdot
}
\]
}by an action of the only transposition $s_{1}$. That is, since a
2-cell Distributive Law is a square, $s_{i}$ acting over a square
is the same thing as acting over a Distributive Law and we have $s_{1}(d_{10}^{\square})=d_{01}^{\square}$. 

In general:
\[
s_{k}(d_{ij}^{\square})=\begin{cases}
d_{(i-1)j}^{\square} & whenever\;i=k,j\neq k,k-1\\
d_{(i+1)j}^{\square} & whenever\;i=k-1,j\neq k,k-1\\
d_{i(j-1)}^{\square} & whenever\;j=k,i\neq k,k-1\\
d_{i(j+1)}^{\square} & whenever\;j=k-1,i\neq k,k-1\\
d_{ji}^{\square} & whenever\;j=k,i=k-1\\
d_{ij}^{\square} & otherwise
\end{cases}
\]

\noindent for $i,j,k\in\mathbf{n}$ such that $j\leq i$. Similarly,
for composed indexing we had
\[
s_{1}(d_{2(11)}^{\square})=d_{2(00)}^{\square}
\]

We now show how can one compose Distributive Laws in $SDCmd_{\mathbf{n}}(\mathcal{C})$
over a 2-category. They are, for different indices $\alpha,\gamma,\beta$
in $\mathbf{n}$ and $0<j<n-1$:{\footnotesize{}
\[
\begin{array}{c}
d_{\alpha\beta}^{\square}+_{(n-1)}d_{\gamma\beta}^{\square}=d_{(\gamma\alpha)\beta}^{\square}\qquad d_{\alpha\beta}^{\square}+_{0}d_{\alpha\gamma}^{\square}=d_{\alpha(\gamma\beta)}^{\square}\\
d_{\alpha\beta}^{\square}+_{j}d_{\alpha\beta}^{\square}=\begin{cases}
d_{(\alpha\alpha)\beta}^{\square} & for\;\partial_{j}^{-}(d_{\alpha\beta}^{\square}+_{j}d_{\alpha\beta}^{\square})=\beta\\
d_{\alpha(\beta\beta)}^{\square} & for\;\partial_{j}^{-}(d_{\alpha\beta}^{\square}+_{j}d_{\alpha\beta}^{\square})=\alpha
\end{cases}
\end{array}
\]
}{\footnotesize \par}

Taking into account (following \cite{Dosen-Petric}) that \emph{a
modality is a finite (possibly empty) sequence of the modal operators
of necessity and possibility,} to get a multimodal system as that
of \cite{Baldoni} in the style of cubical sets we point that every
level in this construction contains one more modality than the former
and correspond to each category $DCmd_{\mathbf{i}}(\mathcal{C})$.
They are connected through the face and degeneration functions whose
action erases or adds one modality respectively, while the empty modality
gives all objects that can be formed in a bicartesian closed category
as the analogues of conjunction, disjunction and implication. 

We have for example 
\[
e_{2}:DCmd_{\mathbf{i}\setminus\{2\}}(\mathcal{C})\rightarrow DCmd_{\mathbf{i}}(\mathcal{C})
\]
such that $e_{2}(NC)=N'NC$ for: 
\begin{itemize}
\item $C$ an object in the 2-category $\mathcal{C}$
\item $N=\square_{0},\square_{1}$ or a chain of possibly several $\square_{0}$
and $\square_{1}$ and 
\item $N'=\square_{0},\square_{1},\square_{2}$ or a chain of possibly $\square_{0}$,
$\square_{1}$ and $\square_{2}$. 
\end{itemize}

\section{The cubical categories $SDMnd(\mathcal{C})$}

We now turn to consider an n-dimensional multiple category of cubical
type by endowing a double category with $m$ arrows in different directions
which turn out to be monads. 
\begin{defn}
Let $DMnd_{\mathbf{2}}(\mathcal{C})$ be the full subcategory of $Q\mathcal{C}$
for which all 1-cells are monads and all diagonal 2-cells are Distributive
Laws between them. 

$DMnd_{\mathbf{2}}(\mathcal{C})$ is a double category which is also
endowed with horizontal maps $aC:M_{2}M_{1}C\rightarrow M_{1}M_{2}C$
in a square{\footnotesize{}
\[
\xymatrix{M_{2}M_{1}C\ar[r]^{aC}\ar[d]_{M_{2}M_{1}u} & M_{1}M_{2}C\ar[d]^{M_{1}M_{2}u}\ar@{=>}[dl]|{au}\\
M_{2}M_{1}C\ar[r]_{aC} & M_{1}M_{2}C
}
\]
}for $C$ an object and $u:C\rightarrow C$ a 1-endocell in $\mathcal{C}$.
It should be noticed that, in this case, the diagonal 2-cells are
written $d_{12}^{\lozenge}$ (switching the subindices from the case
of $d_{21}^{\square}$ for the same square).
\end{defn}
From the double category considered, we can now define $DMnd_{\mathbf{m}}(\mathcal{C})$
as the m-dimensional multiple category of cubical type of Distributive
Laws between monads over a 2-category. $DMnd_{\mathbf{2}}(\mathcal{C})$
can be seen as the image of $Mnd(Mnd(\mathcal{C}))$ through the functor
of quintets $Q$. We begin by showing how can one compose Distributive
Laws between monads in a cell {\footnotesize{}
\[
\xymatrix{\cdot\ar[r]^{M_{1}}\ar[d]_{M_{2}} & \cdot\ar[d]\ar[r]^{M'_{1}}\ar@{=>}[dl]|{d_{M_{1}M_{2}}^{\lozenge}} & \cdot\ar[d]^{M_{2}}\ar@{=>}[dl]|{d_{M'_{1}M{}_{2}}^{\lozenge}}\\
\cdot\ar[d]_{M'_{2}}\ar[r] & \cdot\ar[d]\ar[r]\ar@{=>}[dl]|{d_{M{}_{1}M'_{2}}^{\lozenge}} & \cdot\ar[d]^{M_{2}'}\ar@{=>}[dl]|{d_{M'_{1}M'_{2}}^{\lozenge}}\\
\cdot\ar[r]_{M_{1}} & \cdot\ar[r]_{M'_{1}} & \cdot
}
\]
}{\footnotesize \par}

\noindent we can compose these Distributive Laws horizontal and vertically
denoted respectively 
\[
(d_{M_{1}M_{2}}^{\lozenge}\mid d_{M'_{1}M_{2}}^{\lozenge})=M_{1}'d_{M_{1}M_{2}}^{\lozenge}\cdot d_{M'_{1}M_{2}}^{\lozenge}M_{1}\qquad\left(\frac{d_{M{}_{1}M_{2}}^{\lozenge}}{d_{M_{1}M'_{2}}^{\lozenge}}\right)=d_{M_{1}M'_{2}}^{\lozenge}M{}_{2}\cdot M'_{2}d_{M{}_{1}M_{2}}^{\lozenge}
\]
where both compositions are strict thanks to the 2-categorical structure
of $\mathcal{C}$. 

The \emph{four-middle interchange rule} has the form:
\[
\left(\frac{d_{M_{1}M_{2}}^{\lozenge}\mid d_{M'_{1}M_{2}}^{\lozenge}}{d_{M_{1}M'_{2}}^{\lozenge}\mid d_{M'_{1}M'_{2}}^{\lozenge}}\right)=(\frac{d_{M{}_{1}M_{2}}^{\lozenge}}{d_{M_{1}M'_{2}}^{\lozenge}}\mid\frac{d_{M{}_{2}M'_{1}}^{\lozenge}}{d_{M'_{2}M'_{1}}^{\lozenge}})
\]
To prove that horizontal and vertical compositions are again Distributive
Laws for monads we have just to dualize the diagrams from the 2-categorical
definition in Section 3.
\begin{lem}
Every composition of 2-cells of the form $d^{\lozenge}$ in $DMnd_{\mathbf{m}}(\mathcal{C})$
is a Distributive Law.
\end{lem}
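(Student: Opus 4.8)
The plan is to obtain this statement by dualising, verbatim, the argument used for the first Coherence Lemma in $DCmd_{\mathbf{n}}(\mathcal{C})$. Every step there invoked only the $2$-categorical structure of $\mathcal{C}$ together with the comonad-functor presentation of a Distributive Law from \cite{Street}; passing to the opposite convention $Mnd(Mnd(\mathcal{C}))$ — so that the counit $\varepsilon_k$ is replaced by the unit $\eta_k$, the comultiplication $\delta_k$ by the multiplication $\mu_k$, and all pasting $2$-cells are reversed — turns the monad Distributive Laws $d^{\lozenge}$ into exactly the data a \emph{monad functor} must carry. Identities and associativity of the $+_i$ are already guaranteed by the $2$-categorical structure of $\mathcal{C}$, precisely as in the comonad case, so it suffices to treat one horizontal and one vertical composite of basic cells $d^{\lozenge}_{ij}$ (recalling that the subindices are switched relative to the $d^{\square}$ convention).

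First I would fix indices $i,j,k\in\mathbf{m}$ and, for the horizontal composite $(d^{\lozenge}_{ki}\mid d^{\lozenge}_{kj}) = M_j\,d^{\lozenge}_{ki}\cdot d^{\lozenge}_{kj}\,M_i$, exhibit it as a monad functor over $(C,M_jM_i)$ carrying $M_k$. This amounts to checking the two conditions dual to those used for comonads: the unit triangle, assembled from the unit triangles of $d^{\lozenge}_{ki}$ and $d^{\lozenge}_{kj}$ (the reversal of the two $\varepsilon_k$-triangles), and the multiplication square, assembled from the two $\mu_k$-diagrams. The inner cell playing the role of $(\alpha)$ commutes by naturality of horizontal composition in $\mathcal{C}$, so the assembled diagram commutes. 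One then produces the induced monad unit $\eta:1\rightarrow(M_k,(d^{\lozenge}_{kj}\mid d^{\lozenge}_{ki}))$ and multiplication $\mu$ from the component transformations $\eta_k,\mu_k$, verifying their compatibility squares exactly as the comonad proof verified the corresponding $\varepsilon$ and $\delta$ squares.

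For the vertical composite $\left(\frac{d^{\lozenge}_{ki}}{d^{\lozenge}_{li}}\right) = d^{\lozenge}_{li}\,M_k\cdot M_l\,d^{\lozenge}_{ki}$ I would run the same verification with the $(\beta)$-diagram in dual form, the relevant inner square again commuting by naturality. The middle-four interchange recorded above then guarantees that the two orders of composition agree, so that an arbitrary composite of $d^{\lozenge}$-cells decomposes into these two cases and is therefore itself a monad Distributive Law. The only point requiring genuine care — and the one I expect to be the main obstacle — is getting the orientation of the multiplication pasting right once the $2$-cells are reversed: unlike the counit and comultiplication triangles, the $\mu_k$-square has its whiskerings on the opposite side, so one must confirm that the dual of $(\alpha)$ (respectively $(\beta)$) is still an instance of naturality rather than a new coherence demand. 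Since in a $2$-category this is automatic, no verification beyond the comonad case actually arises.
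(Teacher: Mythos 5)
Your proposal matches the paper's own treatment: the paper disposes of this lemma with the single remark that one need only dualize the diagrams of the comonad coherence lemma from Section 3, which is precisely the dualization (counits to units, comultiplications to multiplications, with the naturality squares $(\alpha)$ and $(\beta)$ and the middle-four interchange carrying over automatically) that you carry out. Your version simply makes explicit the details the paper leaves implicit, so it is correct and takes essentially the same approach.
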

By adding transpositions we had, as in Section 4, a multiple category
of symmetric cubical type $SDMnd_{\mathbf{m}}(\mathcal{C})$. We have
an analogous definition for \emph{transposed Distributive Laws}:

\[
s_{k}(d_{ij}^{\lozenge})=\begin{cases}
d_{(i-1)j}^{\lozenge} & whenever\;i=k,j\neq k,k-1\\
d_{(i+1)j}^{\lozenge} & whenever\;i=k-1,j\neq k,k-1\\
d_{i(j-1)}^{\lozenge} & whenever\;j=k,i\neq k,k-1\\
d_{i(j+1)}^{\lozenge} & whenever\;j=k-1,i\neq k,k-1\\
d_{ji}^{\lozenge} & whenever\;j=k,i=k-1\\
d_{ij}^{\lozenge} & otherwise
\end{cases}
\]

\noindent for $i,j,k\in\mathbf{m}$ such that $j\leq i$. Similarly,
for composed indexing we had
\[
s_{1}(d_{2(11)}^{\lozenge})=d_{2(00)}^{\lozenge}
\]

The possibility operators, denoted by $\diamondsuit$, in a sense
dual to necessity, are performed in our setting by dualizing most
of the structure given up to now. With the same indexing language
$Mod$ a set of \emph{incestual axioms} in the form $G^{a,b,c,d}:\lozenge_{a}\square_{b}A\longrightarrow\square_{c}\lozenge_{d}A$
with $b=c=\epsilon$ will develop the fragment of $G^{a,\epsilon,\epsilon,d}$
for which we consider just diamonds.

\section{The cubical categories $SEnt(\mathcal{C})$}

We now consider an $n$-dimensional multiple category of cubical type
by endowing a double category with $n$ arrows in different directions
corresponding to $\left\lceil \nicefrac{n}{2}\right\rceil $ monads
and $\left\lceil \nicefrac{n+1}{2}\right\rceil $ comonads. We need
then to make use of \emph{entwining Laws} all over them in the form
of the following definitions (see \cite{Power-Watanabe}).
\begin{defn}
\label{ent1}Given a monad $M$ and a comonad $N$ over a 2-category
$\mathcal{C}$ we say that the 2-cell $e:MN\rightarrow NM$ is an
\emph{entwining Law of $M$ over $N$} or a \emph{mixed Distributive
Law from a monad $M$ to a comonad $N$} if the following diagrams
commute {\footnotesize{}
\[
\xymatrix{MMN\ar[rr]^{\mu N}\ar[d]_{Me} &  & MN\ar[d]^{e}\\
MNM\ar[r]_{eM} & NMM\ar[r]_{N\mu} & NM
}
\qquad\xymatrix{MN\ar[r]^{\mu N}\ar[d]_{Me} & MNN\ar[r]^{eN} & NMN\ar[d]^{Ne}\\
NM\ar[rr]_{\delta M} &  & NNM
}
\]
\[
\xymatrix{N\ar[r]^{\eta N}\ar[rd]_{N\eta} & MN\ar[d]^{e}\\
 & NM
}
\qquad\xymatrix{MN\ar[r]^{M\varepsilon}\ar[d]_{e} & M\\
NM\ar[ru]_{\varepsilon M}
}
\]
}{\footnotesize \par}
\end{defn}
\noindent By dualizing this definition we obtain the definition of
an \emph{entwining Law of a comonad $N$ over a monad $M$} or a \emph{mixed
Distributive Law from a comonad $N$ to a monad $M$. }We denote them
respectively $d^{\lozenge\square}$ and $d^{\square\lozenge}$.
\begin{defn}
Let $Ent{}_{\mathbf{2}}(\mathcal{C})$ be the full subcategory of
$Q\mathcal{C}$ for which all horizontal 1-cells are comonads, all
vertical 1-cells are monads and all diagonal 2-cells are Distributive
Laws $d^{\lozenge\square}$ between them.
\end{defn}

Entwining Laws of type $d^{\lozenge\square}$ in it are diagonal 2-cells
giving rise to double categories which are also endowed with horizontal
maps $d^{\lozenge\square}C:MNC\rightarrow NMC$ in squares{\footnotesize{}
\[
\xymatrix{MNC\ar[r]^{d^{\lozenge\square}C}\ar[d]_{MNu} & NMC\ar[d]^{NMu}\ar@{=>}[dl]|{d^{\lozenge\square}C}\\
MNC\ar[r]_{d^{\lozenge\square}C} & NMC
}
\]
}for $C$ an object, $u:C\rightarrow C$ a 1-endocell in $\mathcal{C}$
and $M$ and $N$ a monad and a comonad over $C$ respectively.

From the double category considered we can define $Ent_{\mathbf{n}}(\mathcal{C})$
as the multiple category of cubical type whose nodes are the objects
in a 2-category and whose underlying category is bicartesian closed.
We establish the following conventions: axis in it indexed by odd
numbers in $\mathbf{n}$ are monads, axis indexed by even numbers
in $\mathbf{n}$ are comonads and the 2-cells between them are denoted
$d_{ij}$ and, for $i>j$, identified as:
\[
d_{ij}=\begin{cases}
d_{ij}^{\square} & whenever\;i,j\;are\;even\\
d_{ij}^{\lozenge\square} & whenever\;i\;is\;odd,\;j\;is\;even\\
d_{ij}^{\square\lozenge} & whenever\;i\;is\;even,\;j\;is\;odd\\
d_{ij}^{\lozenge} & whenever\;i,j\;are\;odd
\end{cases}
\]
for $i,j\in\mathbf{n}$ such that $i\geq j$.\footnote{Notice that we do not still have 2-cells in the form $d_{ij}$ with
$j>i$.}

Where we denote the mixed Distributive Laws as 
\[
d_{ij}^{\lozenge\square}:\lozenge_{i}\square_{j}\longrightarrow\square_{j}\lozenge_{i}\qquad d_{ij}^{\square\lozenge}:\square_{i}\lozenge_{j}\longrightarrow\lozenge_{j}\square_{i}
\]

For instance, the different Distributive Laws for our system in the
model of four axis {\footnotesize{}
\[
\xymatrix{\cdot\ar[rr]^{0}\ar[drr]^{1}\ar[ddrr]_{2}\ar[dd]_{3} & \mbox{} & \mbox{}\\
\mbox{} & \mbox{} & \mbox{}\\
\mbox{} &  & \mbox{}
}
\]
}are $d_{10}^{\lozenge\square},d_{20}^{\square},d_{21}^{\square\lozenge},d_{30}^{\lozenge\square},d_{31}^{\lozenge}$
and $d_{32}^{\lozenge\square}$.

We give a description of $Ent_{\mathbf{2}}(\mathcal{C})$ by showing
composition of several Distributive Laws {\footnotesize{}
\[
\xymatrix{\cdot\ar[r]^{N}\ar[d]_{M} & \cdot\ar[d]\ar[r]^{N'}\ar@{=>}[dl]|{d_{MN}^{\lozenge\square}} & \cdot\ar[d]^{M}\ar@{=>}[dl]|{d_{MN'}^{\lozenge\square}}\\
\cdot\ar[d]_{M'}\ar[r] & \cdot\ar[d]\ar[r]\ar@{=>}[dl]|{d_{M'N}^{\lozenge\square}} & \cdot\ar[d]^{M'}\ar@{=>}[dl]|{d_{M'N'}^{\lozenge\square}}\\
\cdot\ar[r]_{N} & \cdot\ar[r]_{N'} & \cdot
}
\]
}{\footnotesize \par}

\noindent where $N$ are comonads, $M$ are monads and $d^{\lozenge\square}$
are entwining Laws with the appropriate subindexes. We compose the
entwining Laws into this square horizontal and vertically denoted
respectively 
\[
(d_{MN}^{\lozenge\square}\mid d_{MN'}^{\lozenge\square})=N'd_{MN}^{\lozenge\square}\cdot d_{MN'}^{\lozenge\square}N\qquad\left(\frac{d_{MN}^{\lozenge\square}}{d_{M'N}^{\lozenge\square}}\right)=d_{M'N}^{\lozenge\square}M\cdot M'd_{MN}^{\lozenge\square}
\]
where both compositions are strict thanks again to the 2-categorical
structure of $\mathcal{C}$. 

The \emph{four-middle interchange rule} has the form:
\[
\left(\frac{d_{MN}^{\lozenge\square}\mid d_{MN'}^{\lozenge\square}}{d_{NM'}^{\lozenge\square}\mid d_{M'N'}^{\lozenge\square}}\right)=(\frac{d_{MN}^{\lozenge\square}}{d_{NM'}^{\lozenge\square}}\mid\frac{d_{MN'}^{\lozenge\square}}{d_{M'N'}^{\lozenge\square}})
\]

\begin{lem}
Every composition of Distributive Laws of the form $d^{\mathcal{M}}$
in $Ent_{\mathbf{n}}(\mathcal{C})$ is a Distributive Law in the form
$d^{\mathcal{M}}$ for $\mathcal{M}=\square,\lozenge,\lozenge\square\textrm{ or }\square\lozenge$.
\end{lem}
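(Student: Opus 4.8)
The plan is to reduce this final coherence lemma to the three compositional facts already established for the pure-necessity case. The key structural observation is that $Ent_{\mathbf{n}}(\mathcal{C})$ is assembled from four families of faces, indexed by the parity of the two axis directions involved: the even-even faces carry comonad distributive laws $d^{\square}$, the odd-odd faces carry monad distributive laws $d^{\lozenge}$, and the two mixed-parity families carry the entwining laws $d^{\lozenge\square}$ and $d^{\square\lozenge}$. Because the $i$- and $j$-compositions $(-\mid-)$ and $\left(\frac{-}{-}\right)$ in any multiple category of cubical type respect the underlying face/degeneracy structure, a composite $a +_i b$ can only be formed when $a$ and $b$ share the relevant boundary, and this forces the two operands to live in the same parity class. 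Hence every admissible composition stays within one of the four families, and I never have to compose, say, a $d^{\square}$ against a $d^{\lozenge\square}$ directly.

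First I would formalize this parity-closure: given composable 2-cells $a,b$ of type $d^{\mathcal{M}}$ and $d^{\mathcal{M}'}$ to be composed along direction $k$, the matching condition $\partial_k^{+}(a)=\partial_k^{-}(b)$ on the boundary 1-cells forces the pair of axis parities occurring in $a$ to agree with those in $b$, so $\mathcal{M}=\mathcal{M}'$. This is exactly the geometric content of the four-middle interchange rule quoted just before the statement, which I may assume holds in $Ent_{\mathbf{n}}(\mathcal{C})$. Once parity-closure is in place, the even-even subfamily is precisely the content of the first coherence Lemma for $DCmd_{\mathbf{n}}(\mathcal{C})$, and the odd-odd subfamily is the second coherence Lemma for $DMnd_{\mathbf{m}}(\mathcal{C})$; both may be invoked verbatim. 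So the only genuinely new work is the two mixed families.

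For the mixed families I would run the same comonad-functor (resp. monad-functor) verification that the first Lemma carried out for $d^{\square}$, but now with the entwining axioms of Definition \ref{ent1} in place of the pure comonad-distributivity axioms. Concretely, for a horizontal composite $(d_{MN}^{\lozenge\square}\mid d_{MN'}^{\lozenge\square})=N'd_{MN}^{\lozenge\square}\cdot d_{MN'}^{\lozenge\square}N$, I would check the four entwining diagrams (the two $\mu$/$\delta$ compatibilities and the two unit/counit triangles $\eta,\varepsilon$) for the composite, deriving each from the corresponding diagrams for $d_{MN}^{\lozenge\square}$ and $d_{MN'}^{\lozenge\square}$ by pasting, with the interchange/naturality squares filling the gaps exactly as squares $(\alpha)$ and $(\beta)$ did in the first proof. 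The vertical case, and the entire $d^{\square\lozenge}$ family, follow by the dualization already announced (\emph{dualizing the diagrams from the 2-categorical definition}), swapping the roles of multiplication and comultiplication.

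The hard part will be bookkeeping the mixed monad/comonad structure coherently: unlike the homogeneous case, an entwining law interacts with a \emph{multiplication} $\mu$ on one side and a \emph{comultiplication} $\delta$ on the other, so the pasting diagram for $\delta$ (duplication) and the one for $\mu$ are no longer formal duals of each other within a single family — one must track which axis contributes the monad unit/multiplication and which contributes the comonad counit/comultiplication at each stage. I would therefore treat the $\mu$-compatibility and $\delta$-compatibility of the composite entwining law as two separate pasting arguments rather than one, and verify that the interchange square inserted between them (the analogue of $(\alpha)$) still commutes by naturality of horizontal composition of the respective 2-cells. Granting parity-closure and the two prior coherence Lemmas, no other obstruction arises, and the result holds for all four values of $\mathcal{M}$.
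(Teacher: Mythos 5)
Your proposal is correct and follows essentially the same route as the paper: the pure $\square$ and $\lozenge$ cases are delegated to the two earlier coherence lemmas, and the mixed cases are handled by whiskering the entwining diagrams of Definition \ref{ent1} for each constituent law and pasting them, with naturality squares filling the gaps exactly as in the $DCmd$ proof. The paper in fact only writes out one representative instance (the horizontal composite $(d_{j_{1}i_{1}}^{\lozenge\square}\mid d_{j_{1}i_{2}}^{\lozenge\square})$ and one of the four entwining axioms), so your more systematic case plan subsumes it; the only quibble is that your ``parity-closure'' step follows from composable cells sharing their axis set (and from the lemma's hypothesis that all laws composed are of the same form $d^{\mathcal{M}}$) rather than from the four-middle interchange rule.
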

\begin{proof}
We consider for example two entwining Laws $d_{j_{1}i_{1}}^{\lozenge\square}$
and $d_{j_{1}i_{2}}^{\lozenge\square}$ with $i_{1},j_{1},i_{2},j_{2}\in\mathbf{n}$
for which the following diagrams commute:{\footnotesize{}
\[
\xymatrix{\lozenge_{j_{1}}\lozenge_{j_{1}}\square_{i_{2}}\ar[r]\ar[dd] & \lozenge_{j_{1}}\square_{i_{2}}\ar[rd]\\
 &  & \square_{i_{2}}\lozenge_{j_{1}}\\
\lozenge_{j_{1}}\square_{i_{2}}\lozenge_{j_{1}}\ar[r] & \square_{i_{2}}\lozenge_{j_{1}}\lozenge_{j_{1}}\ar[ur]
}
\qquad\xymatrix{\lozenge_{j_{1}}\lozenge_{j_{1}}\square_{i_{1}}\ar[r]\ar[dd] & \lozenge_{j_{1}}\square_{i_{1}}\ar[rd]\\
 &  & \square_{i_{1}}\lozenge_{j_{1}}\\
\lozenge_{j_{1}}\square_{i_{1}}\lozenge_{j_{1}}\ar[r] & \square_{i_{1}}\lozenge_{j_{1}}\lozenge_{j_{1}}\ar[ur]
}
\]
}We then have two commutative diagrams{\scriptsize{}
\[
\xymatrix{\lozenge_{j_{1}}\lozenge_{j_{1}}\square_{i_{2}}\square_{i_{1}}\ar[r]\ar[dd] & \lozenge_{j_{1}}\square_{i_{2}}\square_{i_{1}}\ar[rd]\\
 &  & \square_{i_{2}}\lozenge_{j_{1}}\square_{i_{1}}\\
\lozenge_{j_{1}}\square_{i_{2}}\lozenge_{j_{1}}\square_{i_{1}}\ar[r] & \square_{i_{2}}\lozenge_{j_{1}}\lozenge_{j_{1}}\square_{i_{1}}\ar[ur]
}
\]
\[
\xymatrix{\square_{i_{2}}\lozenge_{j_{1}}\lozenge_{j_{1}}\square_{i_{1}}\ar[r]\ar[dd] & \square_{i_{2}}\lozenge_{j_{1}}\square_{i_{1}}\ar[rd]\\
 &  & \square_{i_{2}}\square_{i_{1}}\lozenge_{j_{1}}\\
\square_{i_{2}}\lozenge_{j_{1}}\square_{i_{1}}\lozenge_{j_{1}}\ar[r] & \square_{i_{2}}\square_{i_{1}}\lozenge_{j_{1}}\lozenge_{j_{1}}\ar[ur]
}
\]
}by considering the products $d_{j_{1}i_{2}}^{\lozenge\square}\square_{i_{1}}$
and $\square_{i_{2}}d_{j_{1}i_{1}}^{\lozenge\square}$. 

Now we paste them to get{\tiny{}
\[
\xymatrix{\lozenge_{j_{1}}\lozenge_{j_{1}}\square_{i_{2}}\square_{i_{1}}\ar[r]\ar[d] & \lozenge_{j_{1}}\square_{i_{2}}\square_{i_{1}}\ar[rrd]\\
\lozenge_{j_{1}}\square_{i_{2}}\lozenge_{j_{1}}\square_{i_{1}}\ar[r]\ar[d]\ar@{}[dr]|{\mathstrut\raisebox{2ex}{\textrm{ }\textrm{ }\ensuremath{(\delta)}\kern0.3em }} & \square_{i_{2}}\lozenge_{j_{1}}\lozenge_{j_{1}}\square_{i_{1}}\ar[rr]\ar[d]\ar@{}[dr]|{\mathstrut\raisebox{2ex}{\textrm{ }\textrm{ }\ensuremath{(\zeta)}\kern0.3em }} &  & \square_{i_{2}}\lozenge_{j_{1}}\square_{i_{1}}\ar@{}[dl]|{\mathstrut\raisebox{.5ex}{\textrm{ }\textrm{ }\ensuremath{(\gamma)}\kern0.3em }}\ar[d]\\
\square_{i_{2}}\lozenge_{j_{1}}\lozenge_{j_{1}}\square_{i_{1}}\ar[r]\ar[d] & \square_{i_{2}}\lozenge_{j_{1}}\square_{i_{1}}\ar[urr]\ar[rr] &  & \square_{i_{2}}\square_{i_{1}}\lozenge_{j_{1}}\\
\square_{i_{2}}\lozenge_{j_{1}}\square_{i_{1}}\lozenge_{j_{1}}\ar[r] & \square_{i_{2}}\square_{i_{1}}\lozenge_{j_{1}}\lozenge_{j_{1}}\ar[urr]
}
\]
}where diagrams $(\delta)$, $(\zeta)$ and $(\gamma)$ commute trivially. 

With which we deduce that $(d_{j_{1}i_{1}}^{\lozenge\square}\mid d_{j_{1}i_{2}}^{\lozenge\square})$
behaves as an entwining Law according to Definition \ref{ent1}.
\end{proof}
By adding transpositions we have, as in Section 4, a multiple category
of symmetric cubical type $SEnt_{\mathbf{n}}(\mathcal{C})$ whose
Distributive Laws are for $i,j\in\mathbf{n}$:\footnote{$SEnt_{\mathbf{2}}(\mathcal{C})$ contains both the images of $Mnd(Cmd(\mathcal{C}))$
and $Cmd(Mnd(\mathcal{C}))$ through the functor of quintets $Q$
as well as the images of $Mnd(Mnd(\mathcal{C}))$ and $Cmd(Cmd(\mathcal{C}))$.} 
\[
d_{ij}=\begin{cases}
d_{ij}^{\square} & whenever\;i,j\;are\;even\\
d_{ij}^{\lozenge\square} & whenever\;i\;is\;odd,\;j\;is\;even\\
d_{ij}^{\square\lozenge} & whenever\;i\;is\;even,\;j\;is\;odd\\
d_{ij}^{\lozenge} & whenever\;i,j\;are\;odd
\end{cases}
\]

\section{Sets of axioms}

We list sets of incestual axioms that can be performed in the different
multiple categories of (symmetric and non-symmetric) cubical type.
For that we make use of the terminology given in \cite{Baldoni}.

{\footnotesize{}}%
\noindent\doublebox{\begin{minipage}[t]{1\columnwidth - 2\fboxsep - 7.5\fboxrule - 1pt}%
\centering \begin{center}
\textbf{\footnotesize{}Incestual interaction axioms generated in $DCmd(\mathcal{C})$
with $a=d=\epsilon$ for $G^{a,b,c,d}$ }
\par\end{center}{\footnotesize \par}
\begin{enumerate}
\item \emph{\footnotesize{}Reflexivity axiom}{\footnotesize{} for $G^{\epsilon,i,\epsilon,\epsilon}$:
$\square_{i}A\rightarrow A$}{\footnotesize \par}
\item \emph{\footnotesize{}Transitivity axiom}{\footnotesize{} for $G^{\epsilon,i,(i;i),\epsilon}$:
$\square_{i}A\rightarrow\square_{i}\square_{i}A$}{\footnotesize \par}
\item \emph{\footnotesize{}Restricted Persistency axiom}{\footnotesize{}
for $G^{\epsilon,(i;j),(j;i),\epsilon}$: $\square_{i}\square_{j}A\rightarrow\square_{j}\square_{i}A$.}{\footnotesize \par}
\end{enumerate}
\end{minipage}}{\footnotesize \par}

{\footnotesize{}}%
\noindent\doublebox{\begin{minipage}[t]{1\columnwidth - 2\fboxsep - 7.5\fboxrule - 1pt}%
\centering \begin{center}
\textbf{\footnotesize{}Incestual interaction axioms generated in $SDCmd(\mathcal{C})$
with $a=d=\epsilon$ for $G^{a,b,c,d}$}
\par\end{center}{\footnotesize \par}
\begin{enumerate}
\item \emph{\footnotesize{}General Persistency axiom}{\footnotesize{} for
$G^{\epsilon,(i;j),(j;i),\epsilon}$: $\square_{i}\square_{j}A\rightarrow\square_{j}\square_{i}A$}{\footnotesize \par}
\item \emph{\footnotesize{}Composition axiom}{\footnotesize{} for $G^{\epsilon,(j;i),i,\epsilon}$:
$\square_{j}A\square_{i}A\rightarrow\square_{i}A$.}{\footnotesize \par}
\end{enumerate}
\end{minipage}}{\footnotesize \par}

{\footnotesize{}}%
\noindent\doublebox{\begin{minipage}[t]{1\columnwidth - 2\fboxsep - 7.5\fboxrule - 1pt}%
\centering \begin{center}
\textbf{\footnotesize{}Incestual interaction axioms generated in $DMnd(\mathcal{C})$
with $b=c=\epsilon$ for $G^{a,b,c,d}$}
\par\end{center}{\footnotesize \par}
\begin{enumerate}
\item \emph{\footnotesize{}Reflexivity axiom}{\footnotesize{} for $G^{j,\epsilon,\epsilon,\epsilon}$:
$A\rightarrow\lozenge_{j}A$ }{\footnotesize \par}
\item \emph{\footnotesize{}Transitivity axiom}{\footnotesize{} for $G^{j,\epsilon,\epsilon,(j;j)}$:
$\lozenge_{j}\lozenge_{j}A\rightarrow\lozenge_{j}A$}{\footnotesize \par}
\item \emph{\footnotesize{}Restricted Persistency axiom}{\footnotesize{}
for $G^{(j;i),\epsilon,\epsilon,(i;j)}$: $\lozenge_{i}\lozenge_{j}A\rightarrow\lozenge_{j}\lozenge_{i}A$.}{\footnotesize \par}
\end{enumerate}
\end{minipage}}{\footnotesize \par}

{\footnotesize{}}%
\noindent\doublebox{\begin{minipage}[t]{1\columnwidth - 2\fboxsep - 7.5\fboxrule - 1pt}%
\centering \begin{center}
\textbf{\footnotesize{}Incestual interaction axioms generated in $SDMnd(\mathcal{C})$
with $b=c=\epsilon$ for $G^{a,b,c,d}$}
\par\end{center}{\footnotesize \par}
\begin{enumerate}
\item \emph{\footnotesize{}General Persistency axiom}{\footnotesize{} for
$G^{(j;i),\epsilon,\epsilon,(i;j)}$: $\lozenge_{i}\lozenge_{j}A\rightarrow\lozenge_{j}\lozenge_{i}A$}{\footnotesize \par}
\item \emph{\footnotesize{}Composition axiom}{\footnotesize{} for $G^{(j;i),\epsilon,\epsilon,i}$:
$\lozenge_{j}A\rightarrow\lozenge_{i}\lozenge_{j}A$.}{\footnotesize \par}
\end{enumerate}
\end{minipage}}{\footnotesize \par}

{\footnotesize{}}%
\noindent\doublebox{\begin{minipage}[t]{1\columnwidth - 2\fboxsep - 7.5\fboxrule - 1pt}%
\centering \begin{center}
\textbf{\footnotesize{}Incestual interaction axioms generated in $Ent(\mathcal{C})$
for $G^{a,b,c,d}$}
\par\end{center}{\footnotesize \par}
\begin{enumerate}
\item \emph{\footnotesize{}Seriality axiom}{\footnotesize{} for $G^{\epsilon,i,\epsilon,j}$:
$\square_{i}A\rightarrow\lozenge_{j}A$}{\footnotesize \par}
\item \emph{\footnotesize{}Composition axiom}{\footnotesize{} for $G^{j,i,i,j}$:
$\lozenge_{j}\square_{i}A\rightarrow\square_{i}\lozenge_{j}A$}{\footnotesize \par}
\item {\footnotesize{}Axiom for $G^{j,i,\epsilon,j}$: $\lozenge_{j}\square_{i}A\rightarrow\lozenge_{j}A$.}{\footnotesize \par}
\end{enumerate}
\end{minipage}}{\footnotesize \par}

{\footnotesize{}}%
\noindent\doublebox{\begin{minipage}[t]{1\columnwidth - 2\fboxsep - 7.5\fboxrule - 1pt}%
\centering \begin{center}
\textbf{\footnotesize{}Incestual interaction axioms generated in $SEnt(\mathcal{C})$
not in the form $G^{a,b,c,d}$}
\par\end{center}{\footnotesize \par}
\begin{enumerate}
\item \emph{\footnotesize{}McKinsey axiom}{\footnotesize{}: $\square_{i}\lozenge_{j}A\rightarrow\lozenge_{j}\square_{i}A$}{\footnotesize \par}
\item {\footnotesize{}$\square_{i}A\rightarrow\lozenge_{j}\square_{i}A$.}{\footnotesize \par}
\end{enumerate}
\end{minipage}}{\footnotesize \par}

\end{document}